\documentclass[11pt,a4paper]{article}
\usepackage[utf8]{inputenc}

\usepackage{amssymb,amscd,amsfonts,amsbsy,amsmath,amsthm,amsrefs}
\usepackage{dsfont}
\usepackage{enumerate}
\usepackage{mathrsfs}
\usepackage{epsf,epsfig,esint}
\usepackage{pdfsync}
\usepackage[all]{xy}
\usepackage{pdfpages}

\newtheorem{proposition}{Proposition}[section]
\newtheorem{theorem}[proposition]{Theorem}
\newtheorem{lemma}[proposition]{Lemma}

\theoremstyle{definition}
\newtheorem{definition}[proposition]{Definition}

\theoremstyle{remark}
\newtheorem{remark}[proposition]{Remark}

\numberwithin{equation}{section}

\begin{document}

\title{The Hall problem in domains}
\author{Sylvie Monniaux\footnote{Aix-Marseille Univ., CNRS, I2M UMR7373, Marseille, France 
- {\tt sylvie.monniaux@univ-amu.fr}} \footnote{France-Australia Mathematical Sciences and Interactions ANU - 
CNRS International Research Laboratory - {\tt sylvie.monniaux@cnrs.fr}}}

\date{In memory of Hermann Sohr}


\maketitle

\abstract{In this paper, we develop a framework based on differential forms
that enables us to deal with the Hall problem on domains in any dimension $n\ge 2$.
In the case of smooth bounded domains of ${\mathds{R}}^n$, we prove local existence of mild solutions in subcritical spaces.}

\section{Introduction}

The Hall problem for a magnetic field $b$ in a domain $\Omega\subset{\mathds{R}}^3$ on a 
time interval $(0,T)$ ($0<T\le \infty$) as considered in \cite{ADFL11} (with all constants equal to 1) 
reads
\begin{equation}
\label{hp}\tag{HP}
\partial_t b-\Delta b=-{\rm curl}\,({\rm curl}\,b\times b)\quad\mbox{ in }(0,T)\times\Omega
\end{equation}
where the {\sl magnetic field} (in the absence of magnetic monopole) is denoted by 
$b:(0,T)\times\Omega\to{\mathds{R}}^3$. The equation of \eqref{hp} 
describes the evolution of the magnetic field following the so-called {\sl induction} equation with
the {\sl Hall effect} ${\rm curl}\,({\rm curl}\,b\times b)$.

We assume in all what follows that $\Omega$ is a bounded Lipschitz domain.
The physically relevant boundary conditions for \eqref{hp} as stated in \cite{ADFL11}
are the perfectly conducting wall boundary conditions for the magnetic field; namely
\begin{equation}
\label{bdry-cond}\tag{BC}
\left\{
\begin{array}{rclcl}
\nu\cdot b&=&0&\mbox{ on }&(0,T)\times\partial\Omega\\
\nu\times{\rm curl}\,b+\nu\times({\rm curl}\,b\times b)&=&0&\mbox{ on }&(0,T)\times\partial\Omega,
\end{array}
\right.
\end{equation}
where $\nu(x)$ denotes the exterior unit normal vector at a point $x\in\partial\Omega$.
Denoting by $f$ the nonlinearity $-{\rm curl}\,b\times b$, the system \eqref{hp}-\eqref{bdry-cond} reduces to
\begin{equation}
\label{eq:b,f}\tag{LHP}
\left\{
\begin{array}{rclcl}
\partial_t b-\Delta b&=&{\rm curl}\,f &\mbox{ in }&(0,T)\times\Omega\\
\nu\cdot b&=&0&\mbox{ on }&(0,T)\times\partial\Omega\\
\nu\times{\rm curl}\,b&=&\nu\times f&\mbox{ on }&(0,T)\times\partial\Omega\\
\end{array}
\right.
\end{equation}
Note at this point that for $1<p<\infty$ and $f\in L^p(\Omega,{\mathds{R}}^3)$ with ${\rm curl}\,f\in L^p(\Omega,{\mathds{R}}^3)$, 
the quantity $\nu\times f$ at the boundary exists as a distribution in $B_{p,p}^{-\frac{1}{p}}(\partial\Omega,{\mathds{R}}^3)$ in the
following sense:

Let $\varphi\in B_{p',p'}^{\frac{1}{p}}(\partial\Omega,{\mathds{R}}^3)$, where $p'\in (1,\infty)$ is the conjugate of $p$, $i.e.$, 
$\frac{1}{p}+\frac{1}{p'}=1$: denote by $\Phi$ a $W^{1,p'}$ extension of $\varphi$ in $\Omega$, $i.e.$, 
$\Phi\in W^{1,p'}(\Omega,{\mathds{R}}^3)$ and ${\rm Tr}_{|_{\partial\Omega}}(\Phi)=\varphi$. We define
\begin{equation}
\label{eq:nxf}
\langle \nu\times f,\varphi\rangle_{\partial\Omega}:=\langle {\rm curl}\,f, \Phi\rangle_\Omega -\langle f,{\rm curl}\,\Phi\rangle_\Omega
\end{equation}
where $\langle \cdot,\cdot\rangle_{\partial\Omega}$ denotes the duality pairing 
$\bigl(B_{p,p}^{-\frac{1}{p}}(\partial\Omega,{\mathds{R}}^3),B_{p',p'}^{\frac{1}{p}}(\partial\Omega,{\mathds{R}}^3)\bigr)$
and $\langle \cdot,\cdot\rangle_{\Omega}$ denotes the duality pairing $\bigl(L^p(\Omega,{\mathds{R}}^3),L^{p'}(\Omega,{\mathds{R}}^3)\bigr)$.
We remark that this definition agrees with the usual integration by parts if 
$f\in {\mathscr{C}}(\overline{\Omega},{\mathds{R}}^3)\cap {\mathscr{C}}^1(\Omega,{\mathds{R}}^3)$.
It is easy to check that \eqref{eq:nxf} is independent of the choice of the extension $\Phi$ of $\varphi$.

The goal of this note is to write (and solve) the problem \eqref{eq:b,f} in the language of differential forms of any degree, extending it 
to any dimension $n\ge 2$ (see Section~\ref{sec:LHP}). The difficulty of giving the structure of a mild solution lies in the fact that the 
boundary condition on $\partial\Omega$ is not homogeneous. We will also prove the existence of mild solutions of the nonlinear Hall 
problem in domains in subcritical spaces (see Section~\ref{sec:HPn}).

\section{Differential forms}

A differential form on $\Omega\subset{\mathds{R}}^n$ is a function from $\Omega$ with values in the exterior algebra
$\Lambda=\Lambda^0\oplus \Lambda ^1\oplus ... \oplus \Lambda^n$ of ${\mathbb{R}}^n$. The 
space of $\ell$-vectors $\Lambda^\ell$ ($1\le \ell\le n$) is the span of $\bigl\{e_J, J\subset\{1,2,...,n\}, |J|=\ell\bigr\}$ where
\[
e_J=e_{j_1}\wedge e_{j_2}\wedge ...\wedge e_{j_\ell}\mbox{ for } J=\{e_{j_1},e_{j_2},..., e_{j_\ell}\}
\mbox{ with }1\le j_1<j_2<...<j_\ell\le n.
\]
The $0$-vectors consist in scalars. That way, a differential form $u:\Omega\to \Lambda$ can be represented by
\[
u=\sum_{J\subset\{1,2,...,n\}}u_Je_J=u_0+
\sum_{\underset{1\le j_1<j_2<...<j_\ell\le n}{\ell=1}}^nu_{j_1,...,j_\ell}\,e_{j_1}\wedge e_{j_2}\wedge...\wedge e_{j_\ell},
\]
where $u_0$ and $u_{j_1,...,j_\ell}$, for any $1\le \ell\le n$ and any $1\le j_1<j_2<...<j_\ell\le n$, maps $\Omega$ to 
${\mathds{R}}$.
Here, $\wedge$ is the exterior product in the exterior algebra $\Lambda$, 
$\lrcorner\,$ is the interior product (or contraction). For two differential forms $u=\sum_J u_J e_J$ and $v=\sum_Kv_Ke_K$, 
the notation $(u,v)$ stands for
$\sum_J u_Jv_J$ and for a 1-form $a$, we have the relation $(a\wedge u,v)=(u, a\lrcorner\,v)$. 

We denote by $d=\nabla\wedge =\sum_{i=1}^n\partial_i e_i \wedge$ 
the exterior derivative (it satisfies $d^2=0$) and $\delta=-\nabla \lrcorner\,=-\sum_{i=1}^n\partial_i e_i\lrcorner\,$
represents the interior derivative (or co-derivative) acting on differential forms from $\Omega$ to 
the exterior algebra $\Lambda$ ($\delta^2=0$ as well). 

In dimension $3$, we have these correspondences:
\begin{align*}
	d :\ & \Lambda^0\sim{\mathds{R}} \xrightarrow[]{\nabla} \Lambda^1\sim{\mathds{R}}^3 \xrightarrow[]{{\rm curl}} 
	\Lambda^2\sim{\mathds{R}}^3 \xrightarrow[]{{\rm div}} \Lambda^3\sim {\mathds{R}}\\
	& \Lambda^0\sim{\mathds{R}} \xleftarrow[-{\rm div}]{} \Lambda^1\sim{\mathds{R}}^3 \xleftarrow[{\rm curl}]{} 
	\Lambda^2\sim{\mathds{R}}^3 \xleftarrow[-\nabla]{} \Lambda^3\sim {\mathds{R}}\ : \ \delta
\end{align*}
This means that, when acting on ${\mathscr{C}}^\infty(\overline{\Omega},\Lambda)$, $d$ acts as a gradient on 0-forms, 
as a ${\rm curl}\,$ on 1-forms, as a divergence on 2-forms and maps 3-forms to 0. 

For an operator $A$ acting on functions with values in $\Lambda$, ${\sf N}_p(A)$, ${\sf R}_p(A)$ and ${\sf D}_p(A)$ denote 
respectively the kernel of the operator A in $L^p(\Omega,\Lambda)$, its range in $L^p$ and its domain in $L^p$. We have
(for $A=d$ and $A=\delta$):
\[
{\sf D}_p(d)=\{u\in L^p(\Omega,\Lambda); d u\in L^p(\Omega,\Lambda)\} \quad \mbox{ and }\quad
{\sf D}_p(\delta)=\{u\in L^p(\Omega,\Lambda); \delta u\in L^p(\Omega,\Lambda)\}.
\]
The following integration by parts holds for $u,v\in {\mathscr{C}}^\infty(\overline{\Omega},\Lambda)$:
\begin{equation}
\label{eq:int/parts}
\int_{\Omega}(du,v)\,{\rm d}x=\int_{\Omega}(u,\delta v)\,{\rm d}x +\int_{\partial\Omega}(\nu\wedge u,v)\,{\rm d}\sigma
=\int_{\Omega}(u,\delta v)\,{\rm d}x +\int_{\partial\Omega}(u,\nu\lrcorner\,v)\,{\rm d}\sigma.
\end{equation}
Let us emphasize that, as in the 3D case, for $u\in {\sf D}_p(d)$, the quantity $\nu\wedge u$ on $\partial\Omega$ exists
as a distribution in $B^{-\frac{1}{p}}_{p,p}(\partial\Omega,\Lambda)$. And for $v\in {\sf D}_p(\delta)$, $\nu\lrcorner\,v$
exists as a distribution in $B^{-\frac{1}{p}}_{p,p}(\partial\Omega,\Lambda)$.
Indeed, let $\varphi \in B^{\frac{1}{p}}_{p',p'}(\partial\Omega,\Lambda)$:
we denote by $\Phi$ a $W^{1,p'}$ extension of $\varphi$ in $\Omega$, $i.e.$, $\Phi \in W^{1,p'}(\Omega,\Lambda)$ and
${\rm Tr}_{|_{\partial\Omega}}\Phi=\varphi$ (recall that $\frac{1}{p}=1-\frac{1}{p'}$). Then we define
\[
\langle \nu\wedge u,\varphi\rangle_{\partial\Omega}:=\langle du,\Phi\rangle_\Omega -\langle u,\delta \Phi\rangle_\Omega
\]
and
\[
\langle\nu\lrcorner\,v, \varphi\rangle_{\partial\Omega}:=-\langle \delta v, \Phi\rangle_\Omega + \langle v, d\Phi\rangle_\Omega,
\]
where $\langle\cdot,\cdot\rangle_{\partial\Omega}$ stands for the 
$B^{-\frac{1}{p}}_{p,p}(\partial\Omega,\Lambda)-B^{\frac{1}{p}}_{p',p'}(\partial\Omega,\Lambda)$ pairing and
$\langle \cdot,\cdot \rangle_\Omega$ is used for the $L^p(\Omega,\Lambda)-L^{p'}(\Omega,\Lambda)$ pairing.
It is not difficult to see that these definitions are independent of the choice of the extension $\Phi$ of $\varphi$.

Let us now define the operators ``with boundary conditions" $\underline{d}$ and $\underline{\delta}$:
\[
{\sf D}_p(\underline{d})=\{u\in L^p(\Omega,\Lambda); d u\in L^p(\Omega,\Lambda), \nu\wedge u=0\mbox{ on }\partial\Omega\},
\quad \underline{d}u=du, \mbox{ for } u\in  {\sf D}_p(\underline{d})
\]
and 
\[
{\sf D}_p(\underline{\delta})=\{u\in L^p(\Omega,\Lambda); \delta u\in L^p(\Omega,\Lambda), \nu\lrcorner\,u=0\mbox{ on }\partial\Omega\},
\quad \underline{\delta}u=\delta u, \mbox{ for } u\in  {\sf D}_p(\underline{\delta}).
\]
Thanks to the integration by parts \eqref{eq:int/parts}, it is immediate that $(\underline{\delta}, {\sf D}_{p'}(\underline{\delta}))$ is
the adjoint of $(d, {\sf D}_p(d))$ and $(\underline{d}, {\sf D}_{p'}(\underline{d}))$ is the adjoint of $(\delta, {\sf D}_p(\delta))$.

Assume that $u\in {\sf D}_p(\underline{d})$: since $d^2=0$, $du$ belongs to ${\sf D}_p(d)$
and we have that $\nu\wedge du=0$ in $B^{-\frac{1}{p}}_{p,p}(\partial\Omega,\Lambda)$. 
Indeed, let $\varphi\in B^{\frac{1}{p}}_{p',p'}(\partial\Omega,\Lambda)$ and denote by $\Phi$ a $W^{1,p'}$ extension 
of $\varphi$ in $\Omega$. Applying twice the integration by parts formula \eqref{eq:int/parts} we have that
\begin{equation}
\label{eq:nuxdu}
\langle \nu\wedge du,\varphi\rangle_{\partial\Omega} =\langle d(du), \Phi\rangle_\Omega -\langle du,\delta\Phi\rangle_\Omega
=-\langle u,\delta(\delta\Phi)\rangle_\Omega -\langle\nu\wedge u, {\rm Tr}_{|_{\partial\Omega}}(\delta\Phi)\rangle_{\partial\Omega}=0.
\end{equation}
The same applies to $u\in {\sf D}_p(\underline{\delta})$: $\nu\lrcorner\,\delta u=0$ in $B^{-\frac{1}{p}}_{p,p}(\partial\Omega,\Lambda)$, 
with the same kind of calculus:
\begin{equation}
\label{eq:nu.deltau}
\langle \nu\lrcorner\,\delta u, \varphi\rangle_{\partial\Omega}= -\langle \delta(\delta u),\Phi\rangle_\Omega+\langle \delta u,d\Phi\rangle_\Omega
=\langle u, d(d\Phi)\rangle_\Omega -\langle\nu\lrcorner\,u,\delta \Phi\rangle_{\partial\Omega} =0.
\end{equation}
For more on differential forms, we refer to \cite{McIM18} \S2.3 or \cite{AMcI04} and the references therein.
See also \cite{MM00} \S2 or \cite{MMT01} \S4.

It has been established in \cite{M25} that the following Hodge decompositions hold 
in $L^p(\Omega,\Lambda)$ when $\Omega\subset{\mathds{R}}^n$ is a bounded ${\mathscr{C}}^1$ domain, 
with bounded accompanying projections. That is
\begin{equation}
\label{eq:hodge-dec}
L^p(\Omega,\Lambda)={\sf N}_p(d)\oplus{\sf R}_p(\underline{\delta})
= {\sf N}_p(\delta)\oplus{\sf R}_p(\underline{d})
\end{equation}
hold for $\Omega\subset{\mathds{R}}^n$ whenever $p\in(1,\infty)$. If the domain has only a Lipschitz boundary 
$\partial\Omega$, \eqref{eq:hodge-dec} holds only for $p$ in an open interval $(p_H,p^H)$
containing $[\frac{2n}{n+1},\frac{2n}{n-1}]$ (see \cite{McIM18} \S7).

We denote by ${\mathbb{Q}}$ the 
bounded projection from $L^p(\Omega,\Lambda)$ to ${\sf N}_p(\delta)$: for all $f\in{\sf D}_p(d)$
we have that $d({\mathbb{Q}}f)=df$ in $\Omega$ and $\nu\wedge ({\mathbb{Q}}f)=\nu\wedge f$ on $\partial\Omega$.

In the same spirit, we denote by ${\mathbb{P}}$ the 
bounded projection from
$L^p(\Omega,\Lambda)$ to ${\sf N}_p(d)$: for all $g\in{\sf D}_p(\delta)$, we have that $\delta({\mathbb{P}}g)=\delta g$ 
in $\Omega$ and $\nu\lrcorner\, ({\mathbb{P}}g)=\nu\lrcorner\, g$ on $\partial\Omega$.

 Applying \cite{McIM18}, Theorem~5.1, thanks to the Hodge decompositions
\eqref{eq:hodge-dec}, the Hodge-Dirac operators in $L^p(\Omega,\Lambda)$, $D_{\bot}=\underline{d}+\delta$ with domain
${\sf D}_p(\underline{d})\cap {\sf D}_p(\delta)$ and $D_{\|}=d+\underline{\delta}$ with domain 
${\sf D}_p(d)\cap {\sf D}_p(\underline{\delta})$, admit a bounded $S_\mu^o$ 
holomorphic functional calculus in $L^p(\Omega,\Lambda)$ for all $1<p<\infty$ and all $\mu\in(0,\frac{\pi}{2})$. 
As before, if $\Omega$ is only a bounded Lipschitz domain, this holds for $p\in(p_H,p^H)$. 

Moving on, following \cite{McIM18} Corollary~8.1, the Hodge Laplacian $M:=D_\bot^2$ admits a bounded 
$S_{\mu+}^o$ holomorphic functional calculus in $L^p(\Omega,\Lambda)$ and in ${\sf R}_p(\underline{d})$
for all $1<p<\infty$ and all $\mu\in(0,\frac{\pi}{2})$. With the same arguments, $\tilde{M}:= D_\|^2$
admits a bounded $S_{\mu+}^o$ holomorphic functional calculus in $L^p(\Omega,\Lambda)$ and in 
${\sf R}_p(\underline{\delta})$ for all $1<p<\infty$ and all $\mu\in(0,\frac{\pi}{2})$ ($p_H<p<p^H$ if 
$\Omega$ is only Lipschitz).

\section{The linearised Hall problem}
\label{sec:LHP}

We are now in position to generalise \eqref{eq:b,f} to differential forms of any degree in a bounded ${\mathscr{C}}^1$ domain 
$\Omega\subset{\mathds{R}}^n$, $n\ge 2$. The following two systems are dual to each other:
\begin{equation}
\label{eq:Glhp1}
\left\{
\begin{array}{rclcl}
\partial_tw-\Delta w&=&df&\mbox{ in }&(0,\infty)\times\Omega\\
\nu\wedge w&=&0&\mbox{ on }&(0,\infty)\times\partial\Omega\\
\nu\wedge\delta w&=&\nu\wedge f&\mbox{ on }&(0,\infty)\times\partial\Omega
\end{array}
\right.
\end{equation}
with $f\in {\rm D}_p(d)$ and
\begin{equation}
\label{eq:Glhp2}
\left\{
\begin{array}{rclcl}
\partial_tv-\Delta v&=&\delta g&\mbox{ in }&(0,\infty)\times\Omega\\
\nu\lrcorner\,v&=&0&\mbox{ on }&(0,\infty)\times\partial\Omega\\
\nu\lrcorner\,dv&=&\nu\lrcorner\, g&\mbox{ on }&(0,\infty)\times\partial\Omega
\end{array}
\right.
\end{equation}
with $g\in {\rm D}_p(\delta)$. When restricted to $n=3$ and $f$ a 1-form, looking for 2-forms $w$ solution of 
\eqref{eq:Glhp1} amounts to solving \eqref{eq:b,f}. Replacing $f$ with ${\mathbb{Q}}f$ and $g$ with ${\mathbb{P}}g$,
we may assume that $\delta f=0$ and $dg=0$.

\begin{lemma}
\label{lem:dw=0}
Assume that $w\in {\mathscr{C}}^1((0,\infty),{\mathscr{C}}^2(\Omega,\Lambda))
\cap{\mathscr{C}}([0,\infty),{\mathscr{C}}^1(\overline{\Omega},\Lambda))$ satisfies 
\eqref{eq:Glhp1}. If $dw(0)=0$, then $dw(t)=0$ for all $t\ge0$.

This result is also valid for \eqref{eq:Glhp2}: for $v\in {\mathscr{C}}^1((0,\infty),{\mathscr{C}}^2(\Omega,\Lambda))
\cap{\mathscr{C}}([0,\infty),{\mathscr{C}}^1(\overline{\Omega},\Lambda))$ satisfying \eqref{eq:Glhp2},
if $\delta v(0)=0$, then $\delta v(t)=0$ for all $t\ge0$.
\end{lemma}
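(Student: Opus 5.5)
Set $z(t):=dw(t)$ and show that $z$ solves a homogeneous heat problem with boundary conditions for which uniqueness holds, via an energy estimate. Since $d$ commutes with $\Delta=-(d\delta+\delta d)$ and $d^2=0$, applying $d$ to the equation in \eqref{eq:Glhp1} gives, at least formally in $\Omega$,
\[
\partial_t z-\Delta z=d(df)=0 .
\]
Moreover $z=dw$ satisfies $\delta z=\delta d w$ and $dz=0$, so $\Delta z=-d\delta z$.

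Next I determine the boundary behaviour of $z$. From $\nu\wedge w=0$ on $\partial\Omega$ the argument of \eqref{eq:nuxdu} gives $\nu\wedge dw=0$, i.e.\ $\nu\wedge z=0$. For the second condition, write $\delta z=\delta d w=-\Delta w-d\delta w=df-\partial_t w-d\delta w$. Then $\nu\wedge\delta z=\nu\wedge df-\nu\wedge\partial_tw-\nu\wedge d(\delta w)$. Here $\nu\wedge\partial_t w=\partial_t(\nu\wedge w)=0$. The condition $\nu\wedge(\delta w-f)=0$, combined with \eqref{eq:nuxdu} applied to $\delta w-f$, gives $\nu\wedge d(\delta w-f)=0$. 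Hence $\nu\wedge\delta z=0$ on $\partial\Omega$. So $z$ solves the heat equation with absolute-type conditions $\nu\wedge z=0$, $\nu\wedge\delta z=0$, $dz=0$, and $z(0)=0$.

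Energy estimate: I compute
\[
\tfrac12\frac{d}{dt}\|z\|_2^2=\langle \Delta z,z\rangle_\Omega=-\langle d\delta z,z\rangle_\Omega .
\]
By \eqref{eq:int/parts} this equals $-\|\delta z\|_2^2-\int_{\partial\Omega}(\nu\wedge\delta z,z)\,{\rm d}\sigma=-\|\delta z\|_2^2\le0$. Since $z(0)=0$, we get $z\equiv0$. The second statement is the dual computation with $d,\delta$ and $\wedge,\lrcorner$ interchanged, using \eqref{eq:nu.deltau} in place of \eqref{eq:nuxdu}.

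The main obstacle is regularity. The hypothesis gives only $w\in\mathscr{C}^1(\overline\Omega)$ up to the boundary and $\mathscr{C}^2$ inside, so $z=dw$ is merely continuous up to $\partial\Omega$. The traces $\nu\wedge\delta z$ and the integration by parts must therefore be understood in the weak sense introduced above, namely the $B^{-1/p}_{p,p}$ definitions tested against $W^{1,p'}$ extensions. Even this needs $\delta z\in L^2$ up to the boundary, which is not directly given.

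To handle this, I would avoid pairing $\delta z$ with $z$ pointwise on $\partial\Omega$ and work in duality instead: test the equation for $w$ against $\delta\Phi$ for smooth $\Phi$, and transfer all derivatives onto test functions via the defining formulas. This would show that $z$ is a weak solution of the heat equation associated with $M$-type operators with zero data. Uniqueness of weak solutions then follows from the semigroup generated by the Hodge Laplacian. If needed, I would approximate by smooth $\Phi$ compactly supported in $(0,\infty)$ and use continuity of $w$ in $\mathscr{C}^1(\overline\Omega)$ at $t=0$.
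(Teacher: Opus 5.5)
Your derivation of the equation and of both boundary conditions for $z=dw$ matches the paper's argument step for step. The difference lies only in how you conclude.

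The paper notes that $a=dw$ solves $\partial_t a+Ma=0$ with $M=(\underline{d}+\delta)^2$ and writes $a(t)=e^{-tM}a(0)$. That is, it invokes uniqueness for the $L^p$ semigroup generated by the Hodge Laplacian, which rests on the Hodge decompositions \eqref{eq:hodge-dec} and the functional calculus quoted in Section~2. You instead use the $L^2$ energy identity $\frac12\frac{d}{dt}\|z\|_2^2=-\|\delta z\|_2^2$, which is just the accretivity of $M$ on $L^2$.

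Your route is more elementary:
\begin{itemize}
\item It only uses \eqref{eq:int/parts} in its weak form, i.e.\ the adjointness of $\underline{d}$ and $\delta$.
\item Working in $L^2$ is enough, since $\Omega$ is bounded and $z\in\mathscr{C}([0,\infty),\mathscr{C}(\overline{\Omega},\Lambda))$.
\item It needs no Hodge decomposition or functional calculus.
\item Because $dz=0$, it only uses the condition $\nu\wedge\delta z=0$; the condition $\nu\wedge z=0$ plays no role.
\end{itemize}
The paper's route has the advantage of staying within the $L^p$ framework used in the rest of the paper.

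Both arguments need $z(t)$ to lie in the domain of the operator for $t>0$, in particular $\delta z(t)\in{\sf D}(\underline{d})$ and $\partial_t z(t)$ in $L^2$ (or $L^p$) up to the boundary. The stated hypotheses ($\mathscr{C}^2$ only in the interior, $\mathscr{C}^1$ up to $\partial\Omega$) do not provide this. The paper passes over this silently, so your caveat applies to its proof equally. Your weak-solution fallback leads back to the paper's semigroup argument.

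A minor point of terminology: $\nu\wedge z=0$, $\nu\wedge\delta z=0$ are usually called relative, not absolute, boundary conditions.
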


\begin{proof}
Let $a:=dw$ for $w$ satisfying \eqref{eq:Glhp1}: $a\in {\mathscr{C}}^1((0,\infty),{\mathscr{C}}^1(\Omega,\Lambda))$ 
and satisfies $\partial_ta-\Delta a=0$ in $(0,\infty)\times\Omega$ (since $d(df)=0$). 
Thanks to \eqref{eq:nuxdu}, we have that $\nu\wedge a=0$ on $(0,\infty)\times\partial\Omega$.
We now prove that $\nu\wedge \delta a=0$ on $(0,\infty)\times\partial\Omega$: we have that
$\nu\wedge (f-\delta w)=0$ so that by \eqref{eq:nuxdu}, this gives $\nu\wedge d(f-\delta w)=0$. Moreover, since $\nu\wedge w=0$,
we also have that $\nu\wedge \partial_tw=0$. Using the fact that $\delta a=\delta (dw)=-\Delta w -d\delta w =-\partial_tw+d(f-\delta w)$,
we obtain $\nu\wedge\delta a=0$ as claimed. This gives then that $a(t)=e^{-tM}a(0)$ for all $t\ge 0$ where 
$M=(\underline{d}+\delta)^2$: if $a(0)=dw(0)=0$, then $dw(t)=a(t)=0$ for all $t\ge 0$.

\noindent
The same reasoning applies to $b=\delta v$ for $v$ satisfying \eqref{eq:Glhp2}: 
$b\in {\mathscr{C}}^1((0,\infty),{\mathscr{C}}^1(\Omega,\Lambda))$ and satisfies $\partial_tb-\Delta b=0$ in 
$(0,\infty)\times\Omega$ (since $\delta(\delta g)=0$). Thanks to \eqref{eq:nu.deltau}, we have that $\nu\lrcorner\, b=0$ on 
$(0,\infty)\times\partial\Omega$.
We now prove that $\nu\lrcorner\, db=0$ on $(0,\infty)\times\partial\Omega$: we have that
$\nu\lrcorner\, (g-dv)=0$ so that by \eqref{eq:nu.deltau}, this gives $\nu\wedge \delta(g-dv)=0$. Moreover, since $\nu\lrcorner\, v=0$,
we also have that $\nu\lrcorner\, \partial_tv=0$. Using the fact that $db=d(\delta v)=-\Delta v -\delta (dv) =-\partial_tv+\delta(g-dv)$,
we obtain $\nu\lrcorner\,db=0$ as claimed. This gives then that $b(t)=e^{-t\tilde{M}}b(0)$ for all $t\ge 0$ where 
$\tilde{M}=(\underline{\delta}+d)^2$: if $b(0)=\delta v(0)=0$, then $\delta v(t)=b(t)=0$ for all $t\ge 0$.
\end{proof}

We will now construct solutions for \eqref{eq:Glhp1} and \eqref{eq:Glhp2}.

We are now in position to state our result on existence of solutions of \eqref{eq:Glhp1} and \eqref{eq:Glhp2}.

\begin{theorem}
Let $1<p<\infty$ if $\Omega$ is ${\mathscr{C}}^1$ and $p_H<p<p^H$ is $\Omega$ is Lipschitz.
\begin{itemize}
\item[(i)]
Let $X=L^p(\Omega,\Lambda)$ or $X={\sf R}_p(\underline{d})$.
For all $f\in {\mathscr{C}}(0,\infty;L^p(\Omega,\Lambda))$ such that $s\mapsto \sqrt{s}f(s)\in L^\infty(0,\infty;L^p(\Omega,\Lambda))$ 
and all $w_0\in X$, the linearised Hall problem \eqref{eq:Glhp1}
admits a unique solution $w\in {\mathscr{C}}([0,\infty);X)$ given by
\begin{equation}
\label{eq:w}
w(t)=e^{-tM}w_0+\underline{d}\int_0^t e^{-(t-s)M}{\mathbb{Q}}f(s)\,{\rm d}s, \quad t\ge 0.
\end{equation}
\item[(ii)]
Let $X=L^p(\Omega,\Lambda)$ or $X={\sf R}_p(\underline{\delta})$.
For all $g\in {\mathscr{C}}(0,\infty;L^p(\Omega,\Lambda))$ such that $s\mapsto \sqrt{s}g(s)\in L^\infty(0,\infty;L^p(\Omega,\Lambda))$ 
and all $v_0\in X$, the unique solution 
$v\in {\mathscr{C}}([0,\infty);X)$ of the linearised Hall problem \eqref{eq:Glhp2} is given by
\begin{equation}
\label{eq:v}
v(t)=e^{-t\tilde{M}}v_0+\underline{\delta}\int_0^t e^{-(t-s)\tilde{M}}{\mathbb{P}}g(s)\,{\rm d}s, \quad t\ge 0.
\end{equation}
\end{itemize}
\end{theorem}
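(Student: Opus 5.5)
We treat part (i) in detail; part (ii) follows by exchanging the roles of $d,\underline{d},M,{\mathbb{Q}}$ with $\delta,\underline{\delta},\tilde{M},{\mathbb{P}}$.

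The plan has three steps: make sense of and estimate the Duhamel term in \eqref{eq:w}, check that $w$ solves \eqref{eq:Glhp1} in the mild/weak sense, and prove uniqueness.

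\textbf{Step 1: the Duhamel term is well defined and continuous.} First replace $f$ by ${\mathbb{Q}}f$, so $\delta{\mathbb{Q}}f=0$ and $\nu\wedge{\mathbb{Q}}f=\nu\wedge f$. Since ${\mathbb{Q}}$ is bounded on $L^p$, $\sqrt{s}\,{\mathbb{Q}}f(s)$ is still bounded in $L^p$.

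The key estimate comes from the bounded holomorphic functional calculus of $D_\bot$ and $M=D_\bot^2$:
\[
\|\underline{d}e^{-\tau M}\|_{\mathcal{L}(L^p)}\le \|D_\bot e^{-\tau D_\bot^2}\|\le C\tau^{-1/2},
\]
using that $\underline{d}$ is the component of $D_\bot$ mapping into ${\sf R}_p(\underline{d})$, cut out by the Hodge projections. Here $\underline{d}$ commutes with $e^{-\tau M}$, since $M$ preserves ${\sf R}_p(\underline{d})$ and ${\sf N}_p(\underline{d})$. Hence
\[
\Bigl\|\int_0^t \underline{d}e^{-(t-s)M}{\mathbb{Q}}f(s)\,{\rm d}s\Bigr\|_p\le C\int_0^t (t-s)^{-1/2}s^{-1/2}\,{\rm d}s\,\sup_s\|\sqrt{s}f(s)\|_p=C\pi \sup_s\|\sqrt{s}f(s)\|_p .
\]
This Beta-function bound shows the integral converges in $L^p$. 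Closedness of $\underline{d}$ then lets us pull $\underline{d}$ outside the integral.

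The term lies in ${\sf R}_p(\underline{d})$, so $w(t)\in X$ in both cases. Continuity on $(0,\infty)$ follows from strong continuity of the semigroup and dominated convergence. Continuity at $t=0$ with value $w_0$ needs the Duhamel term to tend to $0$. The uniform bound alone does not give this, so we approximate $\sqrt{s}f(s)$ near $s=0$, or accept that for this class only boundedness holds. This is a subtle point: I would first try to prove $\sup_{s<t}\|\sqrt{s}f(s)\|$-type smallness via a density argument, and otherwise interpret ${\mathscr{C}}([0,\infty);X)$ with the Duhamel term vanishing at $0$ in a weak sense (testing against ${\sf D}_{p'}(M^*)$).

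\textbf{Step 2: $w$ solves \eqref{eq:Glhp1}, tested against $\psi\in{\sf D}_{p'}(M^*)$.} This is the main obstacle, because the boundary condition $\nu\wedge\delta w=\nu\wedge f$ is inhomogeneous. The strategy is to show that $u:=w-e^{-tM}w_0$ satisfies, in the weak sense,
\[
\frac{{\rm d}}{{\rm d}t}\langle u,\psi\rangle+\langle \delta u,\delta\psi\rangle+\langle du,d\psi\rangle=\langle f,\delta\psi\rangle ,
\]
for test forms $\psi$ with $\nu\wedge\psi=0$. This is exactly the weak formulation of \eqref{eq:Glhp1}: integrating $\langle df,\psi\rangle$ by parts via \eqref{eq:int/parts} produces the boundary term $\langle\nu\wedge f,\psi\rangle$, which encodes $\nu\wedge\delta w=\nu\wedge f$.

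To verify this identity, compute formally. Using $\underline{d}e^{-\tau M}=e^{-\tau M}\underline{d}$ on the appropriate core and $\delta{\mathbb{Q}}f=0$, we get $du=0$, since $d\underline{d}=0$. We also get
\[
\langle u(t),\psi\rangle=\int_0^t\langle{\mathbb{Q}}f(s),\delta e^{-(t-s)M^*}\psi\rangle\,{\rm d}s,
\]
and differentiating in $t$ gives the identity. The pairing $\langle{\mathbb{Q}}f,\delta\psi\rangle=\langle f,\delta\psi\rangle$ holds because $\delta\psi\in{\sf R}(\delta)$ annihilates ${\sf R}_p(\underline{d})=({\mathbb{I}}-{\mathbb{Q}})L^p$. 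We first justify all of this for smooth compactly supported-in-time $f$, then pass to the limit using the Step 1 estimate.

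\textbf{Step 3: uniqueness.} Uniqueness follows because the difference of two solutions solves the homogeneous problem, i.e.\ the abstract Cauchy problem for $M$ with zero data. Uniqueness of mild solutions for the generator $-M$ then forces the difference to vanish.
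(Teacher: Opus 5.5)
Your Steps 1 and 3 are the paper's argument: the bound $\|\underline{d}\,e^{-\tau M}\|\lesssim\tau^{-1/2}$, the Beta integral, continuity at $t>0$, and uniqueness from the semigroup. Step 2 is a valid alternative route, compared below. The real issue is the point you leave open, continuity of the Duhamel term at $t=0$. There the density idea is a dead end: with $f$ continuous only on $(0,\infty)$, as stated, norm continuity at $0$ is false.

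Here is a counterexample. Take $p=2$, where $M$ is self-adjoint with compact resolvent, and orthonormal eigenfunctions $\phi_k\in{\sf N}_2(\delta)$ of $M$ with eigenvalues $\lambda_k\to\infty$. Then $\psi_k=\lambda_k^{-1/2}\underline{d}\phi_k$ are orthonormal and $\underline{d}\,e^{-\tau M}\phi_k=\sqrt{\lambda_k}\,e^{-\tau\lambda_k}\psi_k$. Choose $\lambda_{k_{j+1}}\ge 2\lambda_{k_j}$ and $t_j=1/\lambda_{k_j}$. Take continuous $0\le\chi_j\le 1$ supported in $[t_j/2,t_j]$ with $\chi_j=1$ on $[\frac{5}{8}t_j,\frac{7}{8}t_j]$, and let $f(s)=s^{-1/2}\sum_j\chi_j(s)\phi_{k_j}$, $w_0=0$. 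Then $f\in{\mathscr{C}}((0,\infty);L^2)$, $\|\sqrt{s}f(s)\|_2\le 1$, ${\mathbb{Q}}f=f$, and
\[
\|w(t_j)\|_2\ge\langle w(t_j),\psi_{k_j}\rangle=\sqrt{\lambda_{k_j}}\int_0^{t_j}e^{-(t_j-s)\lambda_{k_j}}s^{-1/2}\chi_j(s)\,{\rm d}s\ge\lambda_{k_j}\int_{\frac{5}{8}t_j}^{\frac{7}{8}t_j}e^{-(t_j-s)\lambda_{k_j}}\,{\rm d}s=e^{-1/8}-e^{-3/8}
\]
for every $j$, although $t_j\to 0$.

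The paper's proof has the same hole. Its splitting of $w_2(t)-w_2(t')$ gives continuity at each $t>0$, but at $t=0$ the remaining term is only bounded by $C\pi\|\sqrt{s}f\|_{L^\infty(0,\infty;L^p)}$, which is not small.

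What is true is your weak fallback, and it takes one line: for $\psi\in{\sf D}_{p'}(\delta)$,
\[
|\langle w_2(t),\psi\rangle|=\Bigl|\int_0^t\langle e^{-(t-s)M}{\mathbb{Q}}f(s),\delta\psi\rangle\,{\rm d}s\Bigr|\le 2C\sqrt{t}\,\|\sqrt{s}f\|_{L^\infty(0,\infty;L^p)}\|\delta\psi\|_{p'}.
\]
Together with the uniform bound on $\|w_2(t)\|_p$, this gives $w_2(t)\rightharpoonup 0$. That is also all your uniqueness step needs, since weak solutions of $u'+Mu=0$ are mild. Norm continuity at $0$ does hold under the extra hypothesis $\sqrt{s}\,\|f(s)\|_p\to 0$ as $s\to 0$ (e.g.\ $f$ continuous at $0$), because the Beta bound then gives $\|w_2(t)\|_X\le C\pi\sup_{0<s<t}\|\sqrt{s}f(s)\|_p$.

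On Step 2, the two routes differ as follows.
\begin{itemize}
\item The paper writes $w_2=\underline{d}W$ and gets $W\in L^q(0,T;{\sf D}_p(M))$ from maximal $L^q$-regularity with $1<q<2$, using $\sqrt{s}f(s)\in L^\infty$ to put ${\mathbb{Q}}f$ in $L^q(0,T;L^p)$. It then checks the inhomogeneous condition as a trace identity for a.e.\ $t$: $\nu\wedge\delta w_2=\nu\wedge MW=\nu\wedge({\mathbb{Q}}f-\partial_tW)=\nu\wedge f$, with \eqref{eq:nuxdu} removing $\nu\wedge d\delta W$ and $\nu\wedge\partial_tW$.
\item You pair the Duhamel formula with $\psi\in{\sf D}_{p'}(M^*)$, move $\underline{d}$ across as $\delta$, and differentiate in $t$. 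You then use $\langle{\mathbb{Q}}f,\delta\psi\rangle=\langle f,\delta\psi\rangle$, which is the dual counterpart of $\nu\wedge{\mathbb{Q}}f=\nu\wedge f$.
\end{itemize}
Your route avoids maximal regularity and makes sense when $f(s)$ is merely in $L^p$, which is all the theorem assumes; in that case $df$ and $\nu\wedge f$ only exist weakly anyway. The paper's route gives extra regularity of $W$ and the boundary condition as an identity in $B^{-1/p}_{p,p}(\partial\Omega,\Lambda)$.

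One adjustment to your Step 2: state your identity as $\frac{{\rm d}}{{\rm d}t}\langle u,\psi\rangle+\langle u,M^*\psi\rangle=\langle f,\delta\psi\rangle$ rather than with $\langle\delta u,\delta\psi\rangle$. The reason is that $\delta u(t)=MW(t)$ lies in $L^p$ only for a.e.\ $t$, and only by maximal regularity. This form still encodes $\nu\wedge\delta u=\nu\wedge f$, because $\nu\lrcorner\,\psi$ is unconstrained on $\partial\Omega$ for such $\psi$.
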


\begin{proof}
$(i)$ For $t\ge 0$, we write $w_1(t)=e^{-tM}w_0$ and $w_2(t) =\underline{d}\,W(t)$ where 
\[
W(t)=\int_0^te^{-(t-s)M}{\mathbb{Q}}f(s)\,{\rm d}s.
\]
The first part $(i)$ of the theorem states that $w=w_1+w_2$ is a solution of \eqref{eq:Glhp1}. By classical semigroup theory, it is clear
that $w_1\in{\mathscr{C}}((0,\infty);{\sf D}_p(M))\cap {\mathscr{C}}^1((0,\infty);L^p(\Omega,\Lambda))\cap 
{\mathscr{C}}([0,\infty);L^p(\Omega,\Lambda))$ is solution of 
\[
\left\{
\begin{array}{rclcl}
\partial_tw_1-\Delta w_1&=&0&\mbox{ in }&(0,\infty)\times\Omega\\
\nu\wedge w_1&=&0&\mbox{ on }&(0,\infty)\times\partial\Omega\\
\nu\wedge\delta w_1&=&0&\mbox{ on }&(0,\infty)\times\partial\Omega\\
w_1(0)&=&w_0&\mbox{ in }&\Omega.
\end{array}
\right.
\] 
If $w_0\in {\sf R}_p(\underline{d})$, then $w_0=\underline{d}\alpha_0$ for some $\alpha_0\in {\sf D}_p(\underline{d})$ and
$w_1(t)=e^{-tM}w_0=e^{-tM}(\underline{d}\alpha_0)=\underline{d}e^{-tM}\alpha_0$ with $e^{-tM}\alpha_0\in {\sf D}_p(\underline{d})$
so that $w_1\in {\mathscr{C}}([0,\infty);{\sf R}_p(\underline{d}))$.

Duhamel's formula states that $W$ is solution of 
\[
\left\{
\begin{array}{rclcl}
\partial_t W-\Delta W&=&{\mathbb{Q}}f&\mbox{ in }&(0,\infty)\times\Omega\\
\nu\wedge W&=&0&\mbox{ on }&(0,\infty)\times\partial\Omega\\
\nu\wedge\delta W&=&0&\mbox{ on }&(0,\infty)\times\partial\Omega\\
W(0)&=&0&\mbox{ in }&\Omega,
\end{array}
\right.
\] 
and for all $T>0$, belongs to $L^q(0,T;{\sf D}_p(M))$ for $1<q<2$. Indeed, since $M$ admits a bounded holomorphic functional
calculus in $L^p(\Omega,\Lambda)$, it has also the maximal $L^q$ regularity property in $L^p(\Omega,\Lambda)$. Thanks to 
the fact that $s\mapsto \sqrt{s}f(s)\in L^\infty(0,\infty;L^p(\Omega,\Lambda))$, ${\mathbb{Q}}f\in L^q(0,T; L^p(\Omega,\Lambda))$
for $1<q<2$.

It remains to show that $w_2=\underline{d}\,W$ satisfies the equation $\partial w_2-\Delta w_2=df$ in $(0,\infty)\times\Omega$, 
the initial condition $w_2(0)=0$ in $\Omega$, and the boundary conditions
$\nu \wedge w_2=0$ on $(0,\infty)\times\partial\Omega$ and $\nu\wedge \delta w_2=\nu\wedge f$ on $(0,\infty)\times\partial\Omega$,
and that $w_2\in {\mathscr{C}}([0,\infty),X)$.

The first equation is satisfied since $d{\mathbb{Q}}f=df$. The initial condition is clearly satisfied since $W(0)=0$. The first
boundary condition $\nu\wedge w_2=0$ on $(0,\infty)\times\partial\Omega$ comes from \eqref{eq:nuxdu}. The only involved part is to
show that $\nu\wedge \delta w_2=\nu\wedge f$ on $(0,\infty)\times\partial\Omega$: first, since $\nu\wedge \delta W=0$ ($W(t)\in {\sf D}_p(M)$
for almost all $t>0$), 
we have that $\nu\wedge d\delta W=0$ thanks to \eqref{eq:nuxdu}. This gives then that $\nu\wedge \delta w_2=\nu\wedge MW$ 
on $(0,\infty)\times\partial\Omega$. Now, Duhamel's formula gives that $MW=-\partial_tW+{\mathbb{Q}}f$: since $\nu\wedge W=0$, we
have that $\nu\wedge\partial_tW=0$ and thanks to the definition of the projection ${\mathbb{Q}}$, we have that 
$\nu\wedge {\mathbb{Q}}f=\nu\wedge f$. This yields $\nu\wedge \delta w_2=\nu\wedge MW=\nu\wedge f$ on $(0,\infty)\times\partial\Omega$.

The fact that $w_2$ belongs to ${\mathscr{C}}([0,\infty),X)$ comes from the following estimates. Let $0<t'<t$, we have that
\begin{align*}
w_2(t)-w_2(t')&=\underline{d}\int_0^t e^{-(t-s)M}{\mathbb{Q}}f(s)\,{\rm d}s-\underline{d}\int_0^{t'} e^{-(t'-s)M}{\mathbb{Q}}f(s)\,{\rm d}s\\
&=\underline{d}\int_0^{t'} \bigl(e^{-(t-s)M}-e^{-(t'-s)M}\bigr){\mathbb{Q}}f(s)\,{\rm d}s +\underline{d}\int_{t'}^t e^{-(t-s)M}{\mathbb{Q}}f(s)\,{\rm d}s\\
&=\int_0^{t'}\underline{d}\,e^{-(t'-s)M} \bigl(e^{-(t-t')M}-{\rm Id}\bigr){\mathbb{Q}}f(s)\,{\rm d}s
+\int_{t'}^t\underline{d}\,e^{-(t-s)M}{\mathbb{Q}}f(s)\,{\rm d}s
\end{align*} 
The first term is estimated as follows:
for all $s\in (0,t']$, $\bigl(e^{-(t-t')M}-{\rm Id}\bigr){\mathbb{Q}}f(s)\xrightarrow[t'\to t]{}0$ and for all $s\in (0,t')$,
$\bigl\|\underline{d}\,e^{-(t'-s)M} \bigl(e^{-(t-t')M}-{\rm Id}\bigr){\mathbb{Q}}f(s)\bigr\|_X\le (C+1)\,\frac{C}{\sqrt{t'-s}\sqrt{s}}\,
\|s\mapsto \sqrt{s}\,f(s)\|_{L^\infty(0,\infty;L^p(\Omega,\Lambda))}$, so that 
$s\mapsto \underline{d}\,e^{-(t'-s)M} \bigl(e^{-(t-t')M}-{\rm Id}\bigr){\mathbb{Q}}f(s)\,1\hspace{-2pt}{\rm l}_{(0,t')}(s)$
is integrable on $(0,\infty)$ with $L^1$ norm independent of $t'$. Therefore, 
\[
\Bigl\|\int_0^{t'}\underline{d}\,e^{-(t'-s)M} \bigl(e^{-(t-t')M}-{\rm Id}\bigr){\mathbb{Q}}f(s)\,{\rm d}s\Bigr\|_X\xrightarrow[t'\to t]{}0.
\]
As for the second term, we proceed as follows:
\begin{align*}
\Bigl\|\int_{t'}^t\underline{d}\,e^{-(t-s)M}{\mathbb{Q}}f(s)\,{\rm d}s\Bigr\|_X\le&
\Bigl(\int_{t'}^t\frac{C}{\sqrt{t-s}\sqrt{s}}\,{\rm d}s \Bigr) 
\bigl\|s\mapsto \sqrt{s}\,{\mathbb{Q}}f(s)\bigr\|_{L^\infty(0,\infty;L^p(\Omega,\Lambda))}\\
\le &C \Bigl(\int_{\frac{t'}{t}}^1\frac{1}{\sqrt{1-s}\sqrt{s}}\,{\rm d}s \Bigr)
\bigl\|s\mapsto \sqrt{s}\,{\mathbb{Q}}f(s)\bigr\|_{L^\infty(0,\infty;L^p(\Omega,\Lambda))}\\
&\xrightarrow[t'\to t]{}0.
\end{align*} 
Note that similar computations hold if $t<t'$.
This gives $\|w_2(t)-w_2(t')\|_X\xrightarrow[t'\to t]{}0$, and then the continuity of $w_2$ with values in $X$. 

The problem is linear, so it is clear that $w:=w_1+w_2\in {\mathscr{C}}([0,\infty);X)$ is a solution of \eqref{eq:Glhp1}. 
Uniqueness follows from the fact that $u=0$ is the unique solution in ${\mathscr{C}}([0,\infty);X)$ of the problem
\[
\left\{
\begin{array}{rclcl}
\partial_t u-\Delta u&=&0&\mbox{ in }&(0,\infty)\times\Omega\\
\nu\wedge u&=&0&\mbox{ on }&(0,\infty)\times\partial\Omega\\
\nu\wedge\delta u&=&0&\mbox{ on }&(0,\infty)\times\partial\Omega\\
u(0)&=&0&\mbox{ in }&\Omega,
\end{array}
\right.
\]
(or equivalently, $\partial_tu+Mu=0$, $u(0)=0$)
since $-M$ generates an analytic semigroup on $X$. 

$(ii)$ The proofs of $(i)$ and $(ii)$ are similar, changing $\delta$ with $d$, $\underline{d}$ with $\underline{\delta}$, 
${\mathbb{Q}}$ with ${\mathbb{P}}$, $M$ with $\tilde{M}$.
\end{proof}

\section{The nonlinear Hall problem in subcritical spaces}
\label{sec:HPn}

In this section, we assume that $\Omega$ is a smooth domain in ${\mathds{R}}^n$ ($n\ge 2$). The nonlinear Hall problem
for a magnetic field $b:[0,T)\times\Omega\ni (t,x)\mapsto b(t,x)\in\Lambda^2$ ($0<T < \infty$) reads as follows:
\begin{equation}
\label{eq:HPn}
\tag{HP$_n$}
\left\{\begin{array}{rclcl}
\partial_t b-\Delta b&=&-d\,(\delta b\lrcorner\,b)&\mbox{ in }&(0,T)\times\Omega\\
\nu\wedge b&=&0&\mbox{ on }&(0,T)\times\partial\Omega\\
\nu\wedge\delta b &=&-\nu\wedge(\delta b\lrcorner\,b)&\mbox{ on }&(0,T)\times\partial\Omega\\
b(0,\cdot)&=&b_0&\mbox{ in }&\Omega,
\end{array}
\right.
\end{equation}
where, as before, $\nu(x)$ denotes the exterior unit normal vector at a point $x\in\partial\Omega$. We assume
that $\underline{d}b_0=0$ so that, according to Lemma~\ref{lem:dw=0}, $\underline{d}b(t)=0$ for all $t\in(0,T)$.
Let's point out that with $f=-\delta b\lrcorner\, b$, the system \eqref{eq:HPn} has the same structure as \eqref{eq:Glhp1},
so that we will deal with mild solutions, $i.e.$ solutions of the form \eqref{eq:w}.

Now, note that the nonlinearity of the first equation of \eqref{eq:HPn} has an order 2 derivative. The homogeneity of this
equation is given by $b_\lambda(t,x)=b(\lambda^2t,\lambda x)$, $(t,x)\in (0,\infty)\times {\mathds{R}}^n$ ($\lambda>0$): 
if $b$ satisfies the first equation of \eqref{eq:HPn}, then so does $b_\lambda$. 
This implies that a space of the form $L^r_t(L^s_x)$ is critical ($i.e.$, $\|b_\lambda\|_{L^r_t(L^s_x)}=\|b\|_{L^r_t(L^s_x)}$)
only if $r=s=\infty$. The heat semigroup does not behave well in $L^\infty_x$, so an alternative solution is to 
work in Sobolev spaces $W^{\beta,q}_t(W^{\gamma,p}_x)$ that embed into $L^\infty_{t,x}$: these are not critical spaces anymore,
but subcritical. 

We choose $p\in (n,\infty)$ and $q\in(2,\infty)$, so that $\frac{n}{p}+\frac{2}{q}<1$, and $\alpha\in (0,1)$ such that 
$\frac{2}{q}<\alpha <1-\frac{n}{p}$.
We have then $W^{\frac{\alpha}{2},q}(0,T;{\sf D}_p(M^{\frac{1-\alpha}{2}})\hookrightarrow {\mathscr{C}}_b([0,T)\times \Omega,\Lambda))$
where the embedding constant $C(T)$ depends on $T$ and $C(T)\xrightarrow[T\to 0]{}0$:
\[
\|w\|_{L^{\infty}(0,T;L^\infty(\Omega,\Lambda))}\le C(T) \|w\|_{W^{\frac{\alpha}{2},q}(0,T;{\sf D}_p(M^{\frac{1-\alpha}{2}})}, \quad
w\in W^{\frac{\alpha}{2},q}(0,T;{\sf D}_p(M^{\frac{1-\alpha}{2}}).
\]
Indeed, since $\Omega$ is smooth, we have that ${\sf D}_p(M^{\frac{1}{2}})\subset W^{1,p}(\Omega,\Lambda)$ (see \cite{S95}) and by
Sobolev embeddings, $W^{1-\alpha,p}(\Omega,\Lambda)\hookrightarrow L^\infty(\Omega,\Lambda)$. Moreover, 
$W^{\frac{\alpha}{2},q}(0,T)\hookrightarrow {\mathscr{C}}_b([0,T))$ (since $\frac{1}{q}<\frac{\alpha}{2}$) with an
embedding constant depending on $T$ that goes to $0$ as $T\to 0$..

\begin{definition}
\label{def:mildsol}
Following the linear case, we say that \eqref{eq:HPn} has a mild solution $b$ on $[0,T)$ in the space 
$X_T:=W^{\frac{\alpha}{2},q}(0,T;{\sf D}_p(M^{\frac{1-\alpha}{2}}))$ if $b$ satisfies 
\[
b(t)= e^{-tM}b_0 - \underline{d} \int_0^t e^{-(t-s)M}{\mathbb{Q}}(\delta b(s)\lrcorner\,b(s))\,{\rm d}s, \quad t\in [0,T).
\]
\end{definition}

We have the following existence result in the subcritical case.

\begin{theorem}
Let $\Omega\subset {\mathds{R}}^n$ be a smooth domain and let $1<p,q<\infty$ with $\frac{2}{q}+\frac{n}{p}<1$. 
Let $b_0\in (L^p(\Omega,\Lambda^2),{\sf D}_p(M^{\frac{1}{2}}))_{1-\frac{1}{q},q}$.
Then there exists $T_0>0$ and $b\in X_{T_0}$ solution of \eqref{eq:HPn}.
\end{theorem}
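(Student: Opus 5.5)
We argue by a Banach fixed point in a closed ball of $X_{T}$ for $T$ small. Define
\[
\Phi(b)(t)=e^{-tM}b_0-\underline{d}\int_0^t e^{-(t-s)M}{\mathbb{Q}}\bigl(\delta b(s)\lrcorner\,b(s)\bigr)\,{\rm d}s .
\]
We will show three things: $\Phi$ maps a ball $B_R\subset X_T$ into itself, it is a contraction there, and its fixed point is the desired mild solution in the sense of Definition~\ref{def:mildsol}.

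\textbf{Linear term.} The condition $b_0\in (L^p,{\sf D}_p(M^{\frac12}))_{1-\frac1q,q}$ is meant to give $t\mapsto e^{-tM}b_0\in X_T$. The natural route is through the maximal $L^q$-regularity of $M$, which follows from the bounded $H^\infty$-calculus recalled in Section~2. We write $\underline{d}=\underline{d}M^{-\frac12}M^{\frac12}$; here $\underline{d}M^{-\frac12}$ is bounded by the functional calculus of $D_\bot$. Then we work at the level of $\Lambda=M^{\frac12}$-type regularity: the relevant trace space is the interpolation space above, and mixed-derivative/interpolation results (Sobolevskii type) give $W^{\frac\alpha2,q}(0,T;{\sf D}_p(M^{\frac{1-\alpha}{2}}))$ as an intermediate space between $W^{\frac12,q}(0,T;L^p)$ and $L^q(0,T;{\sf D}_p(M^{\frac12}))$. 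So the plan is to work in the ``half-order'' maximal regularity space
\[
E_T:=W^{\frac12,q}(0,T;L^p)\cap L^q(0,T;{\sf D}_p(M^{\frac12})),
\]
which embeds into $X_T$. The initial datum belongs exactly to the trace space of $E_T$ for the operator $M^{\frac12}$ acting after shifting by $M^{-\frac12}$. We will also want the free part $e^{-tM}b_0$ to be small in $L^\infty$ for small $T$. This requires subtracting it, or using that its $X_T$-norm on $(0,T)$ tends to zero after removing the constant-in-time contribution. The simplest fix is to set $b=e^{-\cdot M}b_0+\tilde b$ and run the contraction on $\tilde b$ in a ball of radius $R$.

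\textbf{Nonlinear term.} Write $\underline{d}\int_0^t e^{-(t-s)M}{\mathbb{Q}}F\,{\rm d}s=\underline{d}M^{-\frac12}\,M^{\frac12}\int_0^t e^{-(t-s)M}{\mathbb{Q}}F\,{\rm d}s$. By maximal regularity, $F\mapsto M\int_0^\cdot e^{-(\cdot-s)M}F\,{\rm d}s$ is bounded on $L^q(0,T;L^p)$, and the map $F\mapsto \int_0^\cdot e^{-(\cdot-s)M}F$ maps $L^q(0,T;L^p)$ into $W^{1,q}\cap L^q({\sf D}(M))$. Hence $M^{\frac12}\int_0^\cdot e^{-(\cdot-s)M}F\,{\rm d}s\in E_T$, and the Duhamel term is bounded in $X_T$ by $C\|F\|_{L^q(0,T;L^p)}$ with $C$ uniform for $T\le1$ (zero initial value). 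With $F={\mathbb{Q}}(\delta b\lrcorner\, b)$ we estimate
\[
\|\delta b\lrcorner\,b\|_{L^q(L^p)}\le \|b\|_{L^\infty_{t,x}}\|\delta b\|_{L^q(L^p)}\le C(T)\|b\|_{X_T}\,\|b\|_{E_T},
\]
using the embedding $X_T\hookrightarrow {\mathscr C}_b$ with constant $C(T)\to0$, together with $\|\delta b\|_{L^p}\lesssim\|M^{\frac12}b\|_{L^p}$ from the Riesz transform bound coming from the $H^\infty$-calculus. The same bilinear estimate applied to differences gives the Lipschitz bound.

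\textbf{Main obstacle and closing the argument.} The main obstacle is that the smallness constant $C(T)\to0$ must be available. This needs the relevant components to vanish at $t=0$, or else the free evolution must be controlled in $L^\infty$ by the norm of $b_0$ rather than by a small constant. Hence we run the fixed point in $E_T$ with norm $\|\cdot\|_{E_T}$, placing the pieces as follows. The free part $b_1=e^{-\cdot M}b_0$ has $\|b_1\|_{L^q(0,T;{\sf D}(M^{\frac12}))}\to0$ as $T\to0$, by dominated convergence. Its $L^\infty$ norm is bounded by $C\|b_0\|$. The $\tilde b$ part vanishes at $0$, and there $C(T)\to0$ holds by an extension-by-zero argument. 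Expanding the bilinear form then yields estimates of the type
\[
\|\Phi(b_1+\tilde b)-b_1\|_{E_T}\le C\bigl(\|b_0\|+C(T)R\bigr)\bigl(\varepsilon(T)+R\bigr),
\]
where $\varepsilon(T)=\|b_1\|_{L^q(0,T;{\sf D}(M^{\frac12}))}$. A similar bound holds for differences. The $\|b_0\|\cdot R$ cross term is the delicate one. To handle it, we instead place $\delta\tilde b$ in $L^{q}$ with a slightly larger time exponent: since $\frac2q+\frac np<1$ is an open condition, we use the embedding at an exponent $\tilde q>q$, so that Hölder in time produces a factor $T^{\frac1q-\frac1{\tilde q}}$. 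This gains smallness, and choosing $R$ and then $T_0$ small closes the contraction. Finally, $\underline{d}b=0$ is inherited from $\underline{d}b_0=0$ via the Duhamel structure, and the fixed point $b\in X_{T_0}$ is the mild solution.
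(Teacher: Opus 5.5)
\textbf{There is a genuine gap, at the step you yourself call delicate.} Your set-up matches the paper's: a Duhamel fixed point, maximal $L^q$-regularity of $M$, and the space $E_T$, which is essentially the paper's $Y_T$. Your splitting $b=a+\tilde b$ with $a=e^{-\cdot M}b_0$ does handle every term except the cross term, and that term is not closed.

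The map $\tilde b\mapsto B(\tilde b,a)=-\underline{d}\int_0^t e^{-(t-s)M}{\mathbb{Q}}\bigl(\delta\tilde b(s)\lrcorner\,a(s)\bigr)\,{\rm d}s$ is \emph{linear} and of second order in $\tilde b$, with coefficient $a\approx b_0$. It has the same parabolic scaling as $\partial_t+M$. Hence on zero-trace elements of $E_T$ its norm is of the order of $\|b_0\|_{L^\infty}$, uniformly as $T\to0$. To see the lower bound (when $n\ge3$, where the Hall term does not vanish), freeze $a$ at an interior point and concentrate $\tilde b$ there. Shrinking $T$ therefore cannot make this term contractive, and the same term reappears in the difference estimate.

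Your H\"older remedy does not apply. For $\tilde b\in E_T$ the form $\delta\tilde b$ lies only in $L^q(0,T;L^p)$, because the whole spatial derivative in $E_T$ is used up; mixed-derivative embeddings give higher time integrability only for $M^\gamma\tilde b$ with $\gamma<\frac12$. Running the fixed point in the $\tilde q$-class instead has two problems:
\begin{itemize}
\item it would require $b_0\in(L^p,{\sf D}_p(M^{\frac12}))_{1-\frac1{\tilde q},\tilde q}$, which is strictly smaller than the given space;
\item measuring the forcing in $L^q$ to gain $T^{\frac1q-\frac1{\tilde q}}$ only returns the Duhamel term to the $q$-class, so the map does not close.
\end{itemize}

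\textbf{How this compares with the paper.} The paper does not split at all. It asserts that $X_T\hookrightarrow L^\infty((0,T)\times\Omega)$ with a constant $C(T)\to0$ on all of $X_T$. That gives $\|B\|\le C\,C(T)$, and Picard closes by choosing $T_0$ with $\|b_0\|\le(4cCC(T_0))^{-1}$.

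Your doubt about that step is well founded. Constant-in-time functions, or $a$ itself (whose sup norm is at least $\|b_0\|_{L^\infty}$ for every $T$), show that $C(T)$ cannot tend to $0$ without a zero trace at $t=0$. So you cannot borrow the paper's step, but you then need a real substitute, and you have not supplied one. Two ways to close the argument:
\begin{itemize}
\item Assume $b_0$ small. Then the cross term has norm $\lesssim\|b_0\|_{L^\infty}<\frac12$, and the rest of your splitting argument goes through, using $\varepsilon(T)$ and the zero-trace embedding constant.
\item Treat the Hall term quasilinearly. Move $\tilde b\mapsto\underline{d}\,{\mathbb{Q}}(\delta\tilde b\lrcorner\,b_0)$ into the linear operator and prove maximal $L^q$-regularity for the perturbed operator. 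This holds because the perturbation is skew-symmetric at the symbol level, so the operator stays normally elliptic. It is not provided by the paper's framework.
\end{itemize}
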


\begin{proof}
This existence theorem is proved via a classical fixed point theorem. We rewrite the problem as follows: we want to find $b\in X_T$ such that
$b=a+B(b,b)$ where $a(t) = e^{-tM}b_0$ and 
\[
B(b,\beta)(t)= - \underline{d} \int_0^t e^{-(t-s)M}{\mathbb{Q}}(\delta b(s)\lrcorner\,\beta(s))\,{\rm d}s, \quad t\in [0,T).
\]
We will work in $Y_T:=X_T\cap L^q(0,T; {\sf D}_p(M^{\frac{1}{2}}))$ instead, endowed with the norm
\[
\|w\|_{Y_T}:=\|M^{\frac{1-\alpha}{2}} w\|_{W^{\frac{\alpha}{2},q}(0,T;L^p(\Omega;\Lambda^2)}
+\|M^{\frac{1}{2}}w\|_{L^q(0,T;L^p(\Omega,\Lambda^2))}, \quad w\in Y_T.
\]

The first step is to show that $a$ belongs to $Y_T$. The choice of $b_0\in (L^p(\Omega),{\sf D}_p(M^{\frac{1}{2}}))_{1-\frac{1}{q},q}$
ensures that $a\in W^{\frac{1}{2},q}(0,\infty;L^p(\Omega,\Lambda^2))\cap L^q(0,\infty; {\sf D}_p(M^{\frac{1}{2}}))$. By interpolation, we can 
prove that $W^{\frac{1}{2},q}(0,\infty;L^p(\Omega,\Lambda^2))\cap L^q(0,\infty; {\sf D}_p(M^{\frac{1}{2}}))\hookrightarrow X_\infty$, 
so that $a\in Y_\infty\subset Y_T$ and 
\[
\|a\|_{Y_\infty}\le c \|b_0\|_{(L^p(\Omega),{\sf D}_p(M^{\frac{1}{2}}))_{1-\frac{1}{q},q}}.
\]
To apply Picard's fixed point theorem it remains to show that $B:Y_T\times Y_T \to Y_T$ is a bounded linear functional.
Let $b,\beta \in Y_T$. We have that $\delta b\in L^q(0,T;L^p(\Omega,\Lambda^1)$ and 
$\beta \in Y_T\subset X_T\subset L^\infty(0,T;L^\infty(\Omega,\Lambda^2)$ so that $\delta b\lrcorner\, \beta \in L^q(0,T;L^p(\Omega,\Lambda^1)$
with
\[
\|\delta b\lrcorner\, \beta\|_{L^q(0,T;L^p(\Omega,\Lambda^1)}\lesssim C(T) \|b\|_{Y_T}\|\beta\|_{Y_T}.
\]
Since $M$ has the maximal $L^q-L^p$ property, we have that 
\[
t\mapsto \int_0^t e^{-(t-s)M}{\mathbb{Q}}\bigl(\delta b(s)\lrcorner\,\beta(s)\bigr)\,{\rm d}s \in 
L^q(0,T;{\sf D}_p(M))\cap W^{1,q}(0,T;L^p(\Omega,\Lambda^1)
\]
with the estimate
\begin{align*}
&\Bigl\|t\mapsto M\int_0^t e^{-(t-s)M}{\mathbb{Q}}\bigl(\delta b(s)\lrcorner\,\beta(s)\bigr)\,{\rm d}s 
\Bigr\|_{L^q(0,T;L^p(\Omega,\Lambda^1)}\\
&+\Bigl\|t\mapsto \int_0^t e^{-(t-s)M}{\mathbb{Q}}\bigl(\delta b(s)\lrcorner\,\beta(s)\bigr)\,{\rm d}s\Bigr\|_ {W^{1,q}(0,T;L^p(\Omega,\Lambda^1)}\\
\lesssim\ & \|\delta b\lrcorner\, \beta\|_{L^q(0,T;L^p(\Omega,\Lambda^1)}
\lesssim \ C(T) \|b\|_{Y_T}\|\beta\|_{Y_T}.
\end{align*}
By interpolation, taking into account that $M$ acting on ${\sf N}_p(\delta)$ is equal to $\delta \underline{d}$, there exists a constant $C>0$ such
that
\[
\|B(b,\beta)\|_{Y_T}\le C C(T) \|b\|_{Y_T}\|\beta\|_{Y_T}.
\]
Now, we pick $b_0\in (L^p(\Omega,\Lambda^2),{\sf D}_p(M^{\frac{1}{2}}))_{1-\frac{1}{q},q}$. Then we choose $T_0>0$ small enough such
that 
\[
\|b_0\|_{(L^p(\Omega,\Lambda^2),{\sf D}_p(M^{\frac{1}{2}}))_{1-\frac{1}{q},q}}\le \frac{1}{4cCC(T_0)}
\]
and then we have that 
$\|a\|_{Y_{T_0}}\le \frac{1}{4\|B\|_{Y_{T_0}\times Y_{T_0}\to Y_{T_0}}}$ and then Picard's fixed point theorem gives a unique solution 
$b\in Y_{T_0}$ such that $b=a+B(b,b)$.
\end{proof}



\end{document}